\documentclass[]{amsart}

\usepackage[utf8]{inputenc}
\usepackage{amssymb,amsmath,amsthm, enumitem, tikz-cd}
\usepackage[hidelinks]{hyperref}

\theoremstyle{plain}
\newtheorem{theorem}{Theorem}[section]
\newtheorem{lemma}[theorem]{Lemma}
\newtheorem*{lemma*}{Lemma}
\newtheorem{corollary}[theorem]{Corollary}
\newtheorem{proposition}[theorem]{Proposition}
\newtheorem*{proposition*}{Proposition}

\theoremstyle{definition}

\newtheorem{conjecture*}{Conjecture}

\newtheorem{question}[theorem]{Question}
\newtheorem*{question*}{Question}
\newtheorem{facts}[theorem]{Facts}
\newtheorem*{facts*}{Facts}
\newtheorem*{convention*}{Convention}
\newtheorem*{acknowledgements*}{Acknowledgements}

\theoremstyle{remark}
\newtheorem{remark}[theorem]{Remark}
\newtheorem*{remark*}{Remark}


\newcommand{\CC}{\mathbb{C}}
\newcommand{\GG}{\mathbb{G}}
\newcommand{\QQ}{\mathbb{Q}}
\newcommand{\ZZ}{\mathbb{Z}}
\newcommand{\Lcal}{\mathcal{L}}
\newcommand{\Ocal}{\mathcal{O}}
\renewcommand{\mod}{\operatorname{\mathsf{mod}}}
\newcommand{\Perf}{\operatorname{\mathsf{Perf}}}
\newcommand{\Coh}{\mathsf{Coh}}

\DeclareMathOperator{\End}{End}
\DeclareMathOperator{\RHom}{RHom}
\DeclareMathOperator{\Gr}{Gr}
\DeclareMathOperator{\Hom}{Hom}
\DeclareMathOperator{\proj}{proj}

\DeclareMathOperator{\rad}{rad}
\DeclareMathOperator{\rk}{rk}
\DeclareMathOperator{\SL}{SL}
\DeclareMathOperator{\Spec}{Spec}
\DeclareMathOperator{\rSpec}{\underline{Spec}}
\DeclareMathOperator{\Sym}{Sym}

\DeclareMathOperator{\Z}{Z}


\title[affine cones over Grassmannians and their stringy $E$-functions]{A note on affine cones over Grassmannians and their stringy $E$-functions}
\author{Timothy De Deyn}
\address{Departement Wiskunde, Vrije Universiteit
	Brussel, Pleinlaan $2$, B-1050 Elsene}
\email{timothy.de.deyn@vub.be}
\thanks{This work was supported by the Research Foundation - Flanders (FWO) Ph.D.\ Fellowship fundamental research 95249.}
\keywords{Stringy $E$-function, Noncommutative crepant resolution}
\subjclass{14A22, 16E40}

\begin{document}
	\begin{abstract}
		We compute the stringy $E$-function of the affine cone over a Grassmannian. 
		If the Grassmannian is not a projective space then its cone does not admit a crepant resolution. Nonetheless the stringy $E$-function is sometimes a polynomial and in those cases the cone admits a noncommutative crepant resolution.
		This raises the question as to whether the existence of a noncommutative crepant resolution implies that the stringy $E$-function is a polynomial.
	\end{abstract}
	\maketitle

	\section{Introduction}
		In \cite{Batyrev1998} Batyrev defined the notion of a `stringy $E$-function', which is an invariant defined for any normal irreducible algebraic variety with at worst log-terminal singularities.
		An easy consequence of the definition is that a variety admitting a crepant resolution has a polynomial stringy $E$-function.
		One can wonder whether the same holds for `noncommutative analogues' of crepant resolutions.
		
		In this note we show the following\footnote{More generally we compute the stringy $E$-function for certain affine cones over Fano varieties, see Proposition \ref{prop1}.}.
		\begin{proposition*}[Propositions \ref{prop1}  and \ref{prop: Est poly iff gcd=1}]
			Let $X$ denote the affine cone over the Grassmannian $\Gr(k,n)$. 
			Then its stringy $E$-function
			\[
				E_{st}(X;u,v)	= \binom{n}{k}_{uv} \frac{(uv-1){uv}^n}{(uv)^n-1}.
			\]
			In particular, this is a polynomial if and only if $\gcd(k,n)=1$. 
		\end{proposition*}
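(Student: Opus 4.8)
The plan is to realise $X$ as the affine cone over $\Gr(k,n)$ polarised by the Plücker line bundle $\Ocal(1)$, resolve the vertex, and apply Batyrev's defining formula for the stringy $E$-function. Since $\Gr(k,n)$ is projectively normal in the Plücker embedding, the total space $f\colon Y\to X$ of $\Ocal(-1)$ — the blow-up of the vertex, with bundle projection $p\colon Y\to\Gr(k,n)$ — is a resolution whose exceptional locus is the single smooth irreducible divisor $E$ given by the zero section, so $E\cong\Gr(k,n)$; this already is a log resolution. The only real geometric input is the discrepancy $a$ defined by $K_Y=f^{*}K_X+aE$: from $K_Y=p^{*}\bigl(K_{\Gr(k,n)}\otimes\Ocal(1)\bigr)$, the identity $\Ocal_Y(E)=p^{*}\Ocal(-1)$, the fact that $\Gr(k,n)$ is Fano of index $n$ (equivalently $-K_{\Gr(k,n)}=\Ocal(n)$), and the vanishing of the divisor class group of $X$ (because $\operatorname{Pic}\Gr(k,n)=\ZZ\,\Ocal(1)$), one reads off $a=n-1$. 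In particular $a>-1$, so $X$ has canonical, hence log-terminal, singularities and $E_{st}(X;u,v)$ is defined.

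For a log resolution with one exceptional divisor of discrepancy $a=n-1$, Batyrev's formula specialises to
\[
	E_{st}(X;u,v)=E\bigl(X\setminus\{0\};u,v\bigr)+E\bigl(\Gr(k,n);u,v\bigr)\cdot\frac{uv-1}{(uv)^{n}-1},
\]
so it remains to evaluate the two ordinary $E$-polynomials. The Schubert cell decomposition of $\Gr(k,n)$ consists of affine cells indexed by partitions $\lambda$ inside a $k\times(n-k)$ box, of complex dimension $|\lambda|$; as the variety is cellular with only even-dimensional cells, $E(\Gr(k,n);u,v)=\sum_{\lambda}(uv)^{|\lambda|}=\binom{n}{k}_{uv}$, the Gaussian binomial coefficient. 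For the punctured cone, the natural $\GG_m$-action exhibits $X\setminus\{0\}$ as the complement of the zero section in $\Ocal(-1)$, hence as a Zariski-locally trivial $\GG_m$-bundle over $\Gr(k,n)$; multiplicativity of $E$ gives $E(X\setminus\{0\};u,v)=\binom{n}{k}_{uv}(uv-1)$. Substituting and using $1+\tfrac{1}{(uv)^{n}-1}=\tfrac{(uv)^{n}}{(uv)^{n}-1}$ yields the asserted closed form; this is precisely the specialisation of Proposition \ref{prop1} to $\Gr(k,n)$.

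For the polynomiality criterion, write $q=uv$ and cancel the visible common factors of $\binom{n}{k}_{q}$ and $\frac{(q-1)q^{n}}{q^{n}-1}$ to get
\[
	E_{st}(X;u,v)=q^{n}\,\frac{(q^{n-1}-1)(q^{n-2}-1)\cdots(q^{n-k+1}-1)}{(q^{k}-1)(q^{k-1}-1)\cdots(q^{2}-1)}.
\]
Factoring $q^{m}-1=\prod_{d\mid m}\Phi_d(q)$ into cyclotomic polynomials, and noting that $\Phi_d(0)\neq0$ so the factor $q^{n}$ cancels nothing, $E_{st}(X;u,v)$ is a polynomial if and only if for every $d\ge2$ the exponent
\[
	e_d:=\lfloor(n-1)/d\rfloor-\lfloor(n-k)/d\rfloor-\lfloor k/d\rfloor
\]
is nonnegative. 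If $\gcd(k,n)=1$ this holds: when $d\nmid n$, superadditivity of the floor function gives $\lfloor(n-k)/d\rfloor+\lfloor k/d\rfloor\le\lfloor n/d\rfloor=\lfloor(n-1)/d\rfloor$; and when $d\mid n$ (so necessarily $d\nmid k$), a short division computation gives $e_d=0$. Conversely, if $g=\gcd(k,n)>1$ then $g\mid n$, $g\mid k$ and $g\nmid n-1$, so $e_g=(n/g-1)-(n-k)/g-k/g=-1$, whence $\Phi_g$ survives in the denominator of the reduced fraction and $E_{st}(X;u,v)$ is not a polynomial. The two places that demand care are the discrepancy computation — which fixes the exponent $n$ in the denominator — and the elementary but fiddly bookkeeping with the floor functions; everything else is formal.
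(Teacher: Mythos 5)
Your proof is correct, and its geometric core coincides with the paper's: you resolve the cone by the total space of $\Ocal(-1)$ over $\Gr(k,n)$ with the zero section as the unique exceptional divisor, obtain discrepancy $n-1$, and apply Batyrev's formula, computing the open stratum via the $\GG_m$-bundle structure exactly as in Proposition \ref{prop1}. Two local differences are worth recording. For the discrepancy you argue globally, from $K_Y=p^*(K_{\Gr(k,n)}\otimes\Ocal(1))$, $\Ocal_Y(E)=p^*\Ocal(-1)$ and the vanishing of $\operatorname{Cl}(X)$ (so that $f^*K_X\sim 0$); the paper instead restricts to the exceptional divisor and uses adjunction together with $(f^*\omega_X)|_D=\Ocal_D$. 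Your route is perfectly valid for the Grassmannian but relies on $\operatorname{Pic}(\Gr(k,n))=\ZZ\cdot\Ocal(1)$, whereas the adjunction argument yields the more general Proposition \ref{prop1} for any Fano $V$ with $\omega_V=\Lcal^{\otimes-n}$. For the polynomiality criterion the paper quotes the known description of the cyclotomic factors of the Gaussian binomial coefficient (for $j\mid n$, $\Phi_j$ divides $\binom{n}{k}_q$ if and only if $j\nmid k$), while you reprove the needed special case by hand: after cancelling $q^n-1$ and $q-1$ you compute the net multiplicity $e_d=\lfloor (n-1)/d\rfloor-\lfloor (n-k)/d\rfloor-\lfloor k/d\rfloor$ of each $\Phi_d$ with $d\ge 2$ and show it is nonnegative for all such $d$ precisely when $\gcd(k,n)=1$. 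Your case analysis ($d\nmid n$ via superadditivity of the floor, $d\mid n$ with $d\nmid k$ giving $e_d=0$, and $d=\gcd(k,n)>1$ giving $e_d=-1$) is correct, so this part is a self-contained elementary substitute for the paper's citation, at the cost of slightly more bookkeeping.
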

		This gives an easier negative answer to a question asked by Batyrev \cite[Question 5.5]{Batyrev1998}, namely whether a GIT quotient of $\CC^n$ modulo an action of a semisimple subgroup $G\subset \SL_n(\CC)$ has a polynomial stringy $E$-function, than the one previously known \cite{Kiem}.
		
		Furthermore, this affine cone never admits a crepant resolution (if $k\neq 1$ or $n-1$).
		However, when $\gcd(k,n)=1$ it does admit a noncommutative crepant resolution (NCCR) (and a strongly crepant categorical resolution), which raises the question:
		\begin{question*}[Question \ref{ques: 1}]
			If a variety admits a NCCR (or a strongly crepant categorical resolution), is its stringy $E$-function a polynomial?
		\end{question*}
		Moreover, we show that the `stringy Euler characteristic' of the affine cone over the Grassmannian is equal to the rank of the Grothendieck group and to the Euler characteristic (computed via periodic cyclic homology) of its NCCR, see Proposition \ref{prop2}.
		The following question thus seems reasonable.
		\begin{question*}[Question \ref{ques: 2}]
			Does the stringy Euler characteristic of a variety equal the Euler characteristic (computed via periodic cyclic homology) of a NCCR?
		\end{question*}
		This indicates that NCCRs, if they exist, seem to encode interesting `stringy’ information of a variety.
		
		\begin{acknowledgements*}
			The author would like to thank Michel Van den Bergh for generously sharing his knowledge and for useful comments on a first draft of this note.
			Moreover, the author would like to thank Geoffrey Janssens, Theo Raedschelders and \v{S}pela \v{S}penko for useful conversations and helpful comments on a preliminary version of this note.
			Lastly, the author thanks the anonymous referee for carefully reading the note and for valuable suggestions.
		\end{acknowledgements*}
	
		\begin{convention*}
			Throughout we work over the complex numbers $\CC$.
		\end{convention*}

	\section{Preliminaries}
		\subsection{\texorpdfstring{$E$}{E}-function}
		Let $X$ be a variety of pure dimension $d$. 
		The cohomology groups with compact supports $H^i_c(X, \QQ)$ ($0\leq i\leq 2d$) carry a natural mixed Hodge structure.
		One defines the $E$-polynomial (also called the Hodge-Deligne polynomial) by
		\[
			E(X; u, v) = \sum_{i=0}^{2d} (-1)^i \sum_{p,q}	h^{p,q}(H^i_c(X,\CC))u^p v^q,
		\]
		where $h^{p,q}(H^i_c(X,\CC))$ denotes the dimension of the $(p, q)$-type Hodge component in $H^i_c(X, \CC)$.
		
		We recall some well-known facts that will be used below.
		\begin{facts}\label{facts}\hfill
			\begin{enumerate}
				\item If $Y\to X$ is a Zariski locally trivial fibration with fibre $F$, then $E(Y;u,v)=E(F;u,v)E(X;u,v)$, see e.g.\ \cite[Corollary 1.9]{DanilovKhovanski}.
				\item $E(\CC^*;u,v)=uv-1$.
                \item $E(\Gr(k,n);u,v)= \binom{n}{k}_{uv}$ is the Gaussian binomial coefficient (also called $q$-binomial coefficient), defined as
				\[
					\binom{n}{k}_q
					:= 
					\frac{(1-q^n)(1-q^{n-1})\cdots(1-q^{n-k+1})} {(1-q)(1-q^2)\cdots(1-q^k)}.
				\]
			\end{enumerate}
		\end{facts}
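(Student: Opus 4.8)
The three facts share a common foundation, and the plan is to isolate that foundation first and then read off each statement. The key point is that the $E$-polynomial descends to a ring homomorphism $E(-;u,v)\colon K_0(\mathrm{Var}_\CC)\to\ZZ[u,v]$ from the Grothendieck ring of varieties, which amounts to two structural properties, both consequences of Deligne's theory of mixed Hodge structures. The first is \emph{additivity} (the scissor relation): for a closed subvariety $Z\subset X$ with open complement $U$, the long exact sequence of compactly supported cohomology
\[
\cdots\to H^i_c(U,\QQ)\to H^i_c(X,\QQ)\to H^i_c(Z,\QQ)\to\cdots
\]
consists of morphisms of mixed Hodge structures; strictness of such morphisms for the Hodge and weight filtrations makes the alternating sum of Hodge numbers additive, giving $E(X;u,v)=E(Z;u,v)+E(U;u,v)$. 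The second is \emph{multiplicativity on products}: the Künneth isomorphism $H^\bullet_c(X\times Y)\cong H^\bullet_c(X)\otimes H^\bullet_c(Y)$ respects mixed Hodge structures, so $E(X\times Y;u,v)=E(X;u,v)\,E(Y;u,v)$. I would take these two properties as inputs and derive all three facts from them.

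Granting this, Fact (2) falls out immediately. From the complete variety $\mathbb{P}^1=\CC\sqcup\{\infty\}$, whose cohomology is of pure Tate type, one reads off $E(\mathbb{P}^1;u,v)=1+uv$, and additivity gives $E(\CC;u,v)=uv$; decomposing $\CC=\CC^*\sqcup\{0\}$ and applying additivity once more yields $E(\CC^*;u,v)=uv-1$.

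For Fact (1) I would argue by stratifying the base. Since $Y\to X$ is Zariski locally trivial, there is a finite stratification of $X$ into locally closed subvarieties $X_j$ over each of which the fibration is trivial, $Y|_{X_j}\cong X_j\times F$. Repeated additivity over this stratification gives $E(Y;u,v)=\sum_j E(X_j\times F;u,v)$, the product formula rewrites each summand as $E(X_j;u,v)\,E(F;u,v)$, and factoring out $E(F;u,v)$ together with additivity in the base gives $E(Y;u,v)=E(F;u,v)\sum_j E(X_j;u,v)=E(F;u,v)\,E(X;u,v)$.

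Fact (3) carries the genuine combinatorial content. The Grassmannian $\Gr(k,n)$ has a Schubert cell decomposition into affine cells, one for each Young diagram $\lambda$ fitting inside a $k\times(n-k)$ box, with the cell indexed by $\lambda$ isomorphic to $\CC^{|\lambda|}$; since $E(\CC^m;u,v)=(uv)^m$, additivity over the cells gives
\[
E(\Gr(k,n);u,v)=\sum_{\lambda\subseteq k\times(n-k)}(uv)^{|\lambda|}.
\]
It then remains to identify this generating function over boxed partitions with the Gaussian binomial coefficient $\binom{n}{k}_{uv}$, which I would establish from the recursion $\binom{n}{k}_q=\binom{n-1}{k-1}_q+q^k\binom{n-1}{k}_q$ matched against the decomposition of partitions according to whether their first column has full height $k$. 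The main obstacle is not any of these deductions but the foundational claim that $E$ is motivic: the strictness of morphisms of mixed Hodge structures underlying additivity is the technical heart of all three facts, and it is precisely this input that a reference such as \cite{DanilovKhovanski} supplies.
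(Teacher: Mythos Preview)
Your argument is correct and follows the standard route to these facts. Note, however, that the paper does not prove this statement at all: it is recorded as a list of \emph{Facts} with a citation to \cite{DanilovKhovanski} for (1), and (2) and (3) are simply stated as well known. So there is no ``paper's own proof'' to compare against; you have supplied what the paper deliberately omits. Your derivation---establishing that $E$ is motivic via strictness of mixed Hodge morphisms, then reading off (2) from $\mathbb{P}^1$, (1) from a trivialising stratification, and (3) from the Schubert cell decomposition together with the $q$-binomial recursion---is exactly the standard justification one would give if asked to expand these references.
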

		
		\subsection{Stringy \texorpdfstring{$E$}{E}-function}\label{subsec: E_st}
		Now let $X$ be a normal irreducible variety with at worst log-terminal singularities.
		The stringy $E$-function of $X$ is defined as a certain motivic integral over its formal arc space, see e.g.\ \cite{CLNS, viusos}.
		However, to actually compute the integral, one usually performs a `change of variables' along a log resolution of $X$.
		We will only give an expression for the stringy $E$-polynomial after this change of variables.
		
		Thus, let $f:Y\to X$ be a log resolution, i.e.\ proper birational morphism with $Y$ smooth and such that the exceptional
		locus is a simple normal crossing (snc) divisor $D$. 
		Denote the irreducible components of $D$ by $D_1,\dots,D_r$ and set $I:=\{1,\dots r\}$.
		Define, for any subset $J\subseteq I$, 
		\[
			D_J:= \begin{cases}
				\bigcap_{j\in J}D_j&\text{if }J\neq\emptyset\\
				Y&\text{if }J=\emptyset
			\end{cases}
		\quad\text{and}\quad D^\circ_J:=D_J-\bigcup_{i\in I-J}D_i.
		\]
		Moreover, let $a_i$ denote the discrepancy  coefficient of $D_i$, i.e.\ $K_Y-f^*K_X=\sum_{i=1}^r a_i D_i$.
		
		After performing the change of variables one arrives at the following expression for the stringy $E$-function, which is how it was originally introduced in \cite{Batyrev1998},
		\[
			E_{st}(X;u,v)= \sum_{J\subseteq I} E( D^\circ_J;u,v)\prod_{j\in J} \frac{uv-1}{(uv)^{a_j+1}-1},
		\]
		where $\prod_{j\in J}$ is $1$ when $J=\emptyset$. 
	
		\subsection{Resolving cones}
		Let $V$ be an irreducible smooth projective variety with an ample line bundle $\Lcal$.
		Moreover, define the section ring $S:=\oplus_{k\geq 0} H^0(V,\Lcal^{\otimes k})$ and let $C:=\Spec S$ be the affine cone over $V$ with respect to $\Lcal$. 
		\begin{lemma}\label{lem: blowup cone}
			With notation as above. 
			The cone $C$ has a resolution given by the vector bundle\footnote{We use the convention that $\mathbf{V}(\Lcal):=\rSpec_V\left( \Sym (\Lcal)\right)$.} $\mathbf{V}(\Lcal)$ with exceptional locus given by the zero section.
		\end{lemma}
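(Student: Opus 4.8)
The plan is to write down the contraction morphism explicitly, check birationality and smoothness on an affine cover, and then obtain properness from the valuative criterion.

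To set up the map, write $\mathbf{V}(\Lcal)=\rSpec_V\mathcal{A}$ with $\mathcal{A}=\Sym(\Lcal)=\bigoplus_{k\ge 0}\Lcal^{\otimes k}$, let $p\colon\mathbf{V}(\Lcal)\to V$ be the bundle projection and $s_0\colon V\to\mathbf{V}(\Lcal)$ the zero section. Taking global sections gives a canonical isomorphism $\Gamma(\mathbf{V}(\Lcal),\Ocal)=\Gamma(V,\mathcal{A})=\bigoplus_{k\ge 0}H^0(V,\Lcal^{\otimes k})=S$, so the universal property of $C=\Spec S$ produces a morphism $f\colon\mathbf{V}(\Lcal)\to C$ whose pullback on functions is the identity of $S$. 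Since $V$ is smooth and $p$ is a line bundle, $\mathbf{V}(\Lcal)$ is smooth of the same dimension as $C$, and $s_0(V)\cong V$ is a smooth irreducible (hence simple normal crossing) divisor in it. As $S_0=H^0(V,\Ocal_V)=\CC$, the ideal $S_{+}$ cuts out a single closed point $\mathbf{0}\in C$, the vertex.

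Next I would show that $f$ restricts to an isomorphism $\mathbf{V}(\Lcal)\setminus s_0(V)\xrightarrow{\ \sim\ }C\setminus\{\mathbf{0}\}$. For homogeneous $\sigma\in S_m$ with $m\ge 1$ one has $f^{-1}(C_\sigma)=\mathbf{V}(\Lcal)_\sigma$; this open set is a $\GG_m$-torsor over $V\setminus Z(\sigma)$, which is affine because $\Lcal$ is ample, and $\Gamma(\mathbf{V}(\Lcal)_\sigma,\Ocal)=S_\sigma$ because global sections over the Noetherian scheme $V$ commute with the filtered colimit computing $\mathcal{A}_\sigma$. Hence $\mathbf{V}(\Lcal)_\sigma=\Spec S_\sigma=C_\sigma$ compatibly with $f$. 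Ampleness of $\Lcal$ also shows that the $C_\sigma$ cover $C\setminus\{\mathbf{0}\}$ and the $\mathbf{V}(\Lcal)_\sigma$ cover $\mathbf{V}(\Lcal)\setminus s_0(V)$, so these local isomorphisms glue; in particular $f$ is birational, and since every homogeneous element of $S_{+}$ vanishes identically along the zero section, $f(s_0(V))=\{\mathbf{0}\}$, so the exceptional locus of $f$ is precisely $s_0(V)$.

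The one genuine obstacle is properness of $f$; it is evidently separated and of finite type. I would verify the valuative criterion: given a discrete valuation ring $R$ with fraction field $K$ and a commutative square with sides $\Spec K\to\mathbf{V}(\Lcal)$ and $\Spec R\to C$, first compose with $p$ and extend the resulting $K$-point of $V$ over $\Spec R$ using properness of $V$; this fixes the lift on the $V$-factor, so what remains is to extend a section over $\Spec K$ of a line bundle on $\Spec R$ — after trivialising, an element $a\in K$ — to a section over $\Spec R$, i.e.\ to prove $a\in R$. Ampleness of $\Lcal$ furnishes some $\sigma\in H^0(V,\Lcal^{\otimes m})$ not vanishing at the closed point of the image in $V$; pulling $\sigma$ back through $C$ and then to $R$ yields $a^{m}\cdot(\text{a unit of }R)\in R$, whence $a\in R$, and uniqueness of the lift is clear. (In the case relevant to this note $S$ is generated in degree one — the Grassmannian being projectively normal in its Plücker embedding — so there $f$ is simply the blow-up of $C$ at $\mathbf{0}$, hence projective.) Combining the three steps, $f\colon\mathbf{V}(\Lcal)\to C$ is a proper birational morphism from a smooth variety whose exceptional locus is the smooth divisor $s_0(V)$, which is the assertion.
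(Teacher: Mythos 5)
Your proof is correct. The paper itself gives no argument here --- it simply cites \cite[\href{https://stacks.math.columbia.edu/tag/0EKI}{Lemma 0EKI}]{stacks-project} --- so what you have written is a self-contained version of that standard argument, and each step checks out: the morphism $f\colon\mathbf{V}(\Lcal)\to C$ from $\Gamma(\mathbf{V}(\Lcal),\Ocal)=S$; the identification $f^{-1}(C_\sigma)=\mathbf{V}(\Lcal)_\sigma=\Spec S[\sigma^{-1}]$ (affineness via the $\GG_m$-torsor over the affine $V\setminus\Z(\sigma)$, and $\Gamma$ commuting with the filtered colimit on the quasi-compact separated $V$); the covering statements, which correctly use ampleness twice (to cover $C\setminus\{\mathbf{0}\}$ and to cover the complement of the zero section); and the valuative criterion, where the inequality $m\,v(a)\geq 0$ coming from $a^m u\in R$ with $u\in R^\times$ indeed forces $a\in R$. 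Two small points worth making explicit rather than implicit: (i) $S$ is a finitely generated $\CC$-algebra because $\Lcal$ is ample on the projective $V$, so $C$ is a Noetherian variety of dimension $\dim V+1$ and the DVR form of the valuative criterion applies to the separated finite-type morphism $f$; (ii) when checking the criterion one should note that the constructed $R$-point of $\mathbf{V}(\Lcal)$ composed with $f$ agrees with the given $R$-point of $C$ (immediate since $C$ is affine and $R\hookrightarrow K$). Your parenthetical shortcut --- that when $S$ is generated in degree one, as for the Pl\"ucker embedding, $f$ is the blow-up of the vertex and properness is automatic --- is exactly the content of the remark the paper places after the lemma, and would in fact suffice for the application in this note; the valuative-criterion argument is what buys the statement in the generality in which the lemma is stated.
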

		\begin{proof}
			This is \cite[\href{https://stacks.math.columbia.edu/tag/0EKI}{Lemma 0EKI}]{stacks-project}.
		\end{proof}
		\begin{remark}
			If $S$ is generated by $S_1$ as an $S_0$-algebra, e.g.\ if $\Lcal$ is very ample and $V$ is projectively normal with respect to the closed embedding defined by $\Lcal$, then $\mathbf{V}(\Lcal)$ is isomorphic to the blow-up of $C$ in the irrelevant ideal $S_+$.
			Moreover, under this isomorphism the exceptional divisor of the blow-up is scheme-theoretically identified with the zero section of the vector bundle.
		\end{remark}
	
	\section{Stringy \texorpdfstring{$E$}{E}-functions for cones over smooth Fano varieties}
		
		\subsection{Set-up}
		Let $V$ be an irreducible smooth projective variety with canonical sheaf being some negative multiple of an ample line bundle $\Lcal$, i.e.\ $\omega_V=\Lcal^{\otimes-n}$ for some $n>0$.
		Our aim in this section is to calculate the stringy $E$-function of the affine cone $X$ over $V$ with respect to $\Lcal$. 
		\begin{proposition}\label{prop1}
			Let $V$ and $X$ be as above. Then $X$ is Gorenstein and
			\[
				E_{st}(X;u,v)= E(V;u,v) \frac{(uv-1)(uv)^n}{(uv)^n-1}.
			\]
		\end{proposition}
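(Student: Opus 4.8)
The plan is to run the change-of-variables formula for $E_{st}$ from Subsection~\ref{subsec: E_st} along the resolution supplied by Lemma~\ref{lem: blowup cone}. Write $f\colon Y:=\mathbf{V}(\Lcal)\to X$ for that resolution, $\pi\colon Y\to V$ for the bundle projection, and $D_1\subseteq Y$ for the zero section, which is the whole exceptional locus of $f$. As $D_1$ is smooth and irreducible it is automatically an snc divisor, so $f$ is already a log resolution with $I=\{1\}$, and the formula becomes
\[
	E_{st}(X;u,v)=E(Y\setminus D_1;u,v)+E(D_1;u,v)\cdot\frac{uv-1}{(uv)^{a_1+1}-1}.
\]
Now $D_1\cong V$, so the second $E$-polynomial is $E(V;u,v)$; and $Y\setminus D_1$ is the complement of the zero section of a line bundle, hence a Zariski locally trivial $\CC^*$-bundle over $V$, so $E(Y\setminus D_1;u,v)=(uv-1)\,E(V;u,v)$ by Facts~\ref{facts}(1)--(2). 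Granting (see below) that $X$ is Gorenstein with $a_1=n-1$, the identity $1+\tfrac1{(uv)^n-1}=\tfrac{(uv)^n}{(uv)^n-1}$ turns the right-hand side into $E(V;u,v)\,(uv-1)(uv)^n/((uv)^n-1)$, as claimed. (Here $\dim V\geq 1$ is harmless to assume; the zero-dimensional case is degenerate.)

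So it remains to show $X$ is Gorenstein and that $a_1=n-1$. For the discrepancy, first compute $\omega_Y$: since $Y=\rSpec_V(\Sym\Lcal)$ one has $\Omega_{Y/V}\cong\pi^*\Lcal$, whence $\omega_Y\cong\pi^*(\omega_V\otimes\Lcal)\cong\pi^*\Lcal^{\otimes(1-n)}$. Next identify $\Ocal_Y(D_1)$: pullback $\pi^*\colon\operatorname{Pic}(V)\to\operatorname{Pic}(Y)$ is an isomorphism (the total space of a line bundle retracts onto its base), so $\Ocal_Y(D_1)\cong\pi^*M$ where $M\cong\Ocal_Y(D_1)|_{D_1}$ is the normal bundle of the zero section, namely $\Lcal^{\otimes-1}$. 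Hence $\Ocal_Y(kD_1)\cong\pi^*\Lcal^{\otimes-k}$ for all $k$, and comparison with $\omega_Y$ gives $\omega_Y\cong\Ocal_Y((n-1)D_1)$. Restricting this isomorphism to $Y\setminus D_1\cong X\setminus\{\text{vertex}\}$ shows the canonical sheaf of the smooth locus of $X$ is trivial, and since the vertex has codimension $\geq 2$ it follows that $\omega_X\cong\Ocal_X$; in particular $K_X$ is Cartier with $K_X=0$, so $f^*K_X=0$ and $K_Y-f^*K_X=(n-1)D_1$, i.e.\ $a_1=n-1\geq 0$. Thus $X$ has at worst canonical singularities, which are rational, hence Cohen--Macaulay; being Cohen--Macaulay with invertible canonical sheaf, $X$ is Gorenstein.

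The main obstacle is getting the discrepancy right, which is entirely a matter of bookkeeping with the explicit geometry of $Y$: the convention defining $\mathbf{V}(\Lcal)$, the relative canonical $\omega_{Y/V}\cong\pi^*\Lcal$, and the identification of the class of the zero section, $\Ocal_Y(D_1)\cong\pi^*\Lcal^{\otimes-1}$. Once these are pinned down the value $a_1=n-1$ drops out, the Gorenstein statement follows formally, and the $E$-polynomial computation is routine. (One should also note the log-terminality needed to write down $E_{st}$, but that is immediate since $a_1=n-1\geq 0>-1$.)
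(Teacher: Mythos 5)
Your proof is correct, and its skeleton coincides with the paper's: the same log resolution $f\colon Y=\mathbf{V}(\Lcal)\to X$ from Lemma \ref{lem: blowup cone}, the same discrepancy $a=n-1$, and the same assembly of $E_{st}$ from $E(Y\setminus D;u,v)=(uv-1)E(V;u,v)$ and $E(D;u,v)=E(V;u,v)$. Where you genuinely diverge is in the two supporting facts. For the discrepancy the paper restricts $\omega_Y=f^*\omega_X\otimes\Ocal_Y(D)^{\otimes a}$ to $D$ and uses adjunction together with $(f^*\omega_X)|_D=\Ocal_D$; this needs $\omega_X$ invertible, so the paper first proves $X$ is Gorenstein by quoting Kodaira vanishing and \cite[Lemma 7.1]{VdB04}. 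You instead compute everything upstairs: $\omega_Y\cong\pi^*\Lcal^{\otimes(1-n)}$ and $\Ocal_Y(D)\cong\pi^*\Lcal^{\otimes-1}$ (via $\operatorname{Pic}(Y)\cong\operatorname{Pic}(V)$ and the normal bundle of the zero section), deduce $\omega_X\cong\Ocal_X$ by restricting to $Y\setminus D\cong X\setminus\{\mathrm{vertex}\}$ and extending over the codimension $\geq 2$ vertex, and only then read off $a=n-1$; Gorensteinness then comes out as a consequence, via canonical $\Rightarrow$ rational (Elkik) $\Rightarrow$ Cohen--Macaulay together with the trivial canonical sheaf. This reverses the paper's logical order (Gorenstein is output rather than input), avoids Kodaira vanishing and the cited lemma, and even yields the slightly stronger conclusion $\omega_X\cong\Ocal_X$, at the price of invoking Elkik's theorem and the standard fact that canonicity can be tested on a single log resolution. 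Two small points of hygiene: the justification ``the total space retracts onto its base'' for $\operatorname{Pic}(Y)\cong\operatorname{Pic}(V)$ should be replaced by an algebraic one (homotopy invariance of $\operatorname{Pic}$, or of $CH^1$, for a Zariski-locally trivial $\mathbb{A}^1$-bundle over the smooth base $V$), and the identification of the trivialized reflexive canonical sheaf with the dualizing sheaf uses normality plus Cohen--Macaulayness, which your argument does supply; with those phrasings tightened the proof is complete.
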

		\begin{proof}
		The fact that $X$ is Gorenstein follows by the assumptions on $V$ together with Kodaira vanishing and \cite[Lemma 7.1]{VdB04}.
		
		As explained in \S \ref{subsec: E_st}, in order to compute the stringy $E$-function we need a log resolution.
		This is obtained via Lemma \ref{lem: blowup cone}. 
		The cone $X$ is resolved by $Y:=\mathbf{V}(\Lcal)$ with exceptional locus $D$ the zero section (which is isomorphic to V). 
		As the exceptional locus is irreducible and smooth, this is indeed a log resolution.
		Therefore, in order to calculate the stringy $E$-function, it suffices to determine the discrepancy.
		This is done in \S 3.2 and then $E_{st}(X;u,v)$ is calculated in \S 3.3.
		\end{proof}

		\subsection{Determining the discrepancy}
		Let $f:Y\to X$ denote the resolution.
		Write
		\[
		\omega_Y=f^*\omega_X\otimes_{\Ocal_Y} \Ocal_Y(D)^{\otimes a}
		\]
		for some $a$.
		By restricting to $D$ and using respectively the adjunction formula, the fact that $\omega_V=\Lcal^{\otimes-n}$, the normal sheaf of the inclusion $D\subset Y$ is $\mathcal{N}_{D/Y}=\Ocal_Y(D)|_D=\Lcal^{\otimes-1}$ and\footnote{As $X$ is Gorenstein, $\omega_X$ is invertible so the claim follows from the commutative diagram
			\[
			\begin{tikzcd}[ampersand replacement=\&, row sep= scriptsize, column sep= scriptsize]
				D\arrow[r, hook]\arrow[d]	\& Y\arrow[d, "f"] \\
				\Spec\CC\arrow[r, hook, "p"] \& X\rlap{ ,}
			\end{tikzcd}
			\]
			where $p$ corresponds to the singular point.
		} 
		$(f^*\omega_X)|_D=\Ocal_D$, one obtains
		\[
		\Lcal^{\otimes-n+1}=\Lcal^{-a}.
		\]
		Hence, as $\Lcal$ is ample and therefore torsion-free in the Picard group, $a$ equals $n-1$.
		
		\subsection{Putting it together}
		Using Facts \ref{facts} we find
		\begin{align*}
			E_{st}(X;u,v)	&= E(Y-D;u,v)+E(D;u,v)\frac{uv-1}{(uv)^n-1} \\
			&= E(\CC^*;u,v)E(V;u,v)+E(V;q)\frac{uv-1}{(uv)^n-1} \\
			&= (uv-1)E(V;u,v)+E(V;u,v)\frac{uv-1}{(uv)^n-1} \\
			&= E(V;u,v) \frac{(uv-1)(uv)^n}{(uv)^n-1}.
		\end{align*}		
	\begin{remark}
		The same calculation holds in the setting of \cite[Example 5.1]{Batyrev1998}, i.e.\ when $\omega_V^{\otimes-l}=\Lcal^{\otimes k}$ for positive integers $k$ and $l$. 
		In this case however $X$ is no longer Gorenstein, but it is $\QQ$-Gorenstein.
		One obtains as discrepancy $k/l-1$ and
		\[
			E_{st}(X;u,v) = E(V;u,v) \frac{(uv-1)(uv)^{k/l}}{(uv)^{k/l}-1},
		\]
		as $\omega_X^{[l]}=\Ocal_X(lK_X)$ is invertible.
	\end{remark}
		
	\section{Cones over Grassmannians}
		\subsection{Set-up and stringy \texorpdfstring{$E$}{E}-function}
		Fix two integers\footnote{If $k=1$ or $n-1$ the cone is non-singular.} $1 < k < n - 1$ and let $\GG$ denote the Grassmannian $\Gr(k,n)$. 
		Then $\omega_\GG=\Ocal_\GG(-n)$ (for the Pl\"{u}cker embedding), and hence we can deduce the stringy $E$-function of $X$, the affine cone over $\GG$ with respect to $\Ocal_\GG(1)$, by Proposition \ref{prop1}.
		As the only dependency in the stringy $E$-function is in $uv$, we introduce $q:=uv$ and write the function in this variable for ease of notation.
		We have, using Facts \ref{facts},
		\begin{align*}
			E_{st}(X;q)	= \binom{n}{k}_q \frac{(q-1)q^n}{q^n-1}.
		\end{align*}
		We are interested in when this function is a polynomial.
		As the Gaussian binomial coefficients are always polynomials, this boils down to understanding their irreducible factors.
		\begin{proposition}\label{prop: Est poly iff gcd=1}
			The stringy $E$-function $E_{st}(X;q)$ is a polynomial if and only if $\gcd(k,n)=1$.
		\end{proposition}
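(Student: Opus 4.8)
The plan is to reduce the question to the cyclotomic factorisation of the Gaussian binomial coefficient. I would start from the standard identity $q^m-1=\prod_{d\mid m}\Phi_d(q)$, where $\Phi_d$ denotes the $d$-th cyclotomic polynomial, and apply it to each factor $1-q^i$ occurring in the definition of $\binom{n}{k}_q$. This shows that $\Phi_d$ divides $\binom{n}{k}_q$ with multiplicity
\[
m_d:=\Big\lfloor\tfrac nd\Big\rfloor-\Big\lfloor\tfrac{n-k}{d}\Big\rfloor-\Big\lfloor\tfrac kd\Big\rfloor,
\]
since the numerator contributes $\lfloor n/d\rfloor-\lfloor(n-k)/d\rfloor$ and the denominator $\lfloor k/d\rfloor$ copies of $\Phi_d$. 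Because $0\le\lfloor a+b\rfloor-\lfloor a\rfloor-\lfloor b\rfloor\le 1$ one has $m_d\in\{0,1\}$ for all $d$; in particular this re-proves that $\binom{n}{k}_q$ is a polynomial.

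Next I would use $q^n-1=(q-1)\prod_{d\mid n,\,d>1}\Phi_d(q)$ to rewrite
\[
E_{st}(X;q)=\binom{n}{k}_q\,\frac{(q-1)q^n}{q^n-1}=\frac{q^n\prod_{d}\Phi_d(q)^{m_d}}{\prod_{d\mid n,\,d>1}\Phi_d(q)}.
\]
Every $\Phi_d$ with $d>1$ has nonzero constant term, hence is coprime to $q$; as the $\Phi_d$ are moreover pairwise coprime irreducibles in $\QQ[q]$, the right-hand side is a polynomial if and only if $m_d\ge 1$ — equivalently $m_d=1$ — for each divisor $d>1$ of $n$.

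The final step is a short computation of $m_d$ when $d\mid n$: writing $k=d\lfloor k/d\rfloor+r$ with $0\le r<d$, one gets $\lfloor(n-k)/d\rfloor=n/d-\lfloor k/d\rfloor-\lceil r/d\rceil$, so that $m_d=\lceil r/d\rceil$, which is $0$ if $d\mid k$ and $1$ otherwise. Hence $E_{st}(X;q)$ is a polynomial exactly when no divisor $d>1$ of $n$ divides $k$, i.e.\ exactly when $\gcd(k,n)=1$. There is no genuine obstacle in this argument; the only points that need care are the bound $m_d\in\{0,1\}$ and the coprimality remarks that legitimise cancelling cyclotomic factors against the denominator, so I would make the identity $q^m-1=\prod_{d\mid m}\Phi_d(q)$ and the relevant floor inequalities explicit rather than leaving them implicit.
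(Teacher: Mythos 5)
Your argument is correct and takes essentially the same route as the paper: factor $q^n-1$ into cyclotomic polynomials and determine which $\Phi_d$ with $d\mid n$ divide $\binom{n}{k}_q$. The only difference is that the paper cites the criterion ``$\Phi_d$ divides $\binom{n}{k}_q$ if and only if $d\nmid k$'' from the literature, whereas you rederive it from the floor-function multiplicity formula $m_d=\lfloor n/d\rfloor-\lfloor (n-k)/d\rfloor-\lfloor k/d\rfloor$, which makes the proof self-contained but does not change the underlying idea.
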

		\begin{proof}
			Let $\Phi_j(q)$ denote the $j$th cyclotomic polynomial. 
			It is well-known that
			\[
				q^n-1 = \prod_{j\mid n}\Phi_j(q).
			\]
			The result now follows since for $j\mid n$, by \cite[Proposition 3.]{GaussianCoef}, $\Phi_j(q)$ divides $\binom{n}{k}_{q}$ if and only if $j\nmid k$.
		\end{proof}
		There is an alternative description of $X$ as an $\SL_k(\CC)$ quotient.
		Let $V$ and $W$ be respectively $k$- and $n$-dimensional vector spaces over $\CC$. 
		Consider the standard action of $\SL(V)$ on $\Hom(W,V)$, then $X=\Hom(W,V)/\!\!/\SL(V)$.
		Thus Proposition \ref{prop: Est poly iff gcd=1} immediately implies the following.
		\begin{corollary}
			When $\gcd(k,n)\neq1$ the affine cone over the Grassmannian $\Gr(k,n)$ gives a counterexample to \cite[Question 5.5]{Batyrev1998}, namely whether a GIT quotient of $\CC^n$ modulo an action of a semisimple subgroup $G\subset \SL_n(\CC)$ has a polynomial stringy $E$-function.
		\end{corollary}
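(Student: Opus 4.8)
The plan is to strip the denominator down to cyclotomic factors and check polynomiality factor by factor. Since $\binom{n}{k}_q$ is a polynomial, the expression $E_{st}(X;q)=\binom{n}{k}_q\frac{(q-1)q^n}{q^n-1}$ is a polynomial exactly when $q^n-1$ divides $\binom{n}{k}_q\,(q-1)\,q^n$ in $\QQ[q]$. I would factor $q^n-1=\prod_{j\mid n}\Phi_j(q)$ into distinct monic irreducibles, where $\Phi_j$ denotes the $j$th cyclotomic polynomial. The factor $\Phi_1(q)=q-1$ is cancelled by the $(q-1)$ in the numerator, and no $\Phi_j$ with $j>1$ shares a root with $q^n$ (whose only root is $0$, not a root of unity), so the monomial $q^n$ plays no role. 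Hence $E_{st}(X;q)$ is a polynomial if and only if $\Phi_j(q)\mid\binom{n}{k}_q$ for every $j\mid n$ with $j>1$.

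Next I would establish, or cite, the criterion that for $j\mid n$ one has $\Phi_j(q)\mid\binom{n}{k}_q$ if and only if $j\nmid k$. The quick way to see this is to compute the order of vanishing of $\binom{n}{k}_q=\prod_{i=1}^{k}\frac{1-q^{n-i+1}}{1-q^i}$ at a primitive $j$th root of unity $\zeta$: the numerator contributes $\#\{m\in[n-k+1,n]:j\mid m\}=\lceil k/j\rceil$ simple zeros (using $j\mid n$) and the denominator $\#\{m\in[1,k]:j\mid m\}=\lfloor k/j\rfloor$, so the order of vanishing of $\binom{n}{k}_q$ at $\zeta$ is $1$ when $j\nmid k$ and $0$ when $j\mid k$. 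Since $q^n-1$ vanishes to order exactly $1$ at $\zeta$, the divisibility $\Phi_j(q)\mid\binom{n}{k}_q$ holds precisely when $j\nmid k$; this is \cite[Proposition 3.]{GaussianCoef}.

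Finally I would translate the condition ``$j\nmid k$ for all $j\mid n$ with $j>1$'' into $\gcd(k,n)=1$: if $d:=\gcd(k,n)>1$ then $d$ is itself such a $j$ that divides $k$, so the condition fails; conversely, if $\gcd(k,n)=1$ then no divisor $j>1$ of $n$ divides $k$. Combining the three steps yields the proposition.

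I expect the only genuine obstacle to be the cyclotomic divisibility criterion for Gaussian binomials; the surrounding reductions are routine manipulation of the factorization $q^n-1=\prod_{j\mid n}\Phi_j(q)$. One should also note that $\binom{n}{k}_q$ is a bona fide polynomial, so that the orders of vanishing are well defined and divisibility by the monic polynomial $q^n-1$ is the same over $\QQ[q]$ and over $\CC[q]$.
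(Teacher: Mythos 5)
Your argument, as written, proves the preceding statement (Proposition \ref{prop: Est poly iff gcd=1}: $E_{st}(X;q)$ is a polynomial if and only if $\gcd(k,n)=1$), and it does so correctly --- the reduction to cyclotomic factors and the order-of-vanishing count $\lceil k/j\rceil-\lfloor k/j\rfloor$ at a primitive $j$th root of unity is exactly the divisibility criterion the paper cites from \cite{GaussianCoef}, so that part is fine and even a bit more self-contained than the paper. But it is not a proof of the corollary. The corollary asserts that the cone is a \emph{counterexample to Batyrev's Question 5.5}, and non-polynomiality of $E_{st}$ alone says nothing about that question unless you also place $X$ in the setting of the question: you must exhibit $X$ as a GIT quotient of an affine space $\CC^N$ by a \emph{semisimple} subgroup $G\subset\SL_N(\CC)$. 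Your proposal never addresses this, and it is precisely the content that turns the proposition into the corollary.

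The missing step is the classical invariant-theoretic identification the paper uses: with $\dim V=k$, $\dim W=n$, the invariant ring $\CC[\Hom(W,V)]^{\SL(V)}$ is generated by the $k\times k$ minors and is the Pl\"ucker coordinate ring of $\Gr(k,n)$, so $X=\Hom(W,V)/\!\!/\SL(V)$ is a quotient of $\CC^{kn}$. One then checks that this fits Batyrev's hypotheses: $\SL(V)\cong\SL_k(\CC)$ is semisimple, and it acts on $\Hom(W,V)\cong V\otimes W^*$ with determinant $\det(g)^n=1$, hence through a subgroup of $\SL_{kn}(\CC)$. (One also needs $X$ to have log-terminal singularities so that $E_{st}$ is defined, which the paper has already established via Proposition \ref{prop1} and the discrepancy computation.) Only after this identification does your non-polynomiality computation for $\gcd(k,n)\neq 1$ yield the claimed counterexample.
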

		
		\subsection{(Noncommutative) crepant resolutions}
		As $X$ is an $\SL_k(\CC)$ quotient of a smooth variety and $\SL_k(\CC)$ is connected with no 
		nontrivial characters, $X$ is factorial \cite[Theorem 3.17.]{AGIV}.
		In addition, by the previous section we see that $X$ has at worst terminal singularities.
		Therefore $X$ cannot admit a crepant resolution (factoriality implies that the exceptional locus of any resolution has codimension 1 \cite{mathoverflow1}).
		
		However, if $\gcd(k,n)=1$ it was shown in \cite{SVdB} (for $k=2$) and \cite{Doyle} (for general $k$) that $X$ does admit a NCCR (see \S 4.3 for more details).
		Moreover, $X$ always admits a weakly crepant categorical resolution and it admits a strongly crepant categorical resolution when $\gcd(k,n)=1$, see \cite{Kuz} for $k=2$ whilst it follows from loc.\ cit.\ and the semi-orthogonal decomposition constructed in \cite{Fonarev} for general $k$.
        This together with Proposition \ref{prop: Est poly iff gcd=1} raises the following question.
        \begin{question}\label{ques: 1}
        	If a variety admits a NCCR (or a strongly crepant categorical resolution), is its stringy $E$-function a polynomial?
        \end{question}
    	At this point it is worthwhile to indicate the relationship between NCCRs and strongly crepant categorical resolutions. However, in order not to disturb the flow of the text, this is done in Appendix \ref{appendix}.
		
		\begin{remark}
			As far as we are aware there are no general results showing the non-existence of NCCRs when $\gcd(k,n)\neq 1$. 
			However, for $k=2$ and $n=4$ the non-existence can be proven using results of \cite{Dao}.
			Of course, if Question \ref{ques: 1} has an affirmative answer the non-existence would follow.
			In fact, one of the main motivations behind Question \ref{ques: 1} is to obtain a criterion to show the non-existence of NCCRs.
		\end{remark}    
    
		\begin{remark}
			One can easily check that every example in \cite[Section 7]{Kuz} admitting a strongly crepant categorical resolution has a polynomial stringy $E$-function.
		\end{remark}
	
		\begin{remark}
			As the case $\gcd(k,n)\neq1$ shows, admitting a weakly crepant categorical resolution is not enough to imply that the stringy $E$-function is a polynomial. 
		\end{remark}
		
		\begin{remark}
			The negative answer to \cite[Question 5.5]{Batyrev1998} in \cite{Kiem} comes from the GIT quotient of $\SL_2(\CC)$ acting diagonally on three copies of the adjoint representation.
			It is interesting to note that this singularity has a twisted NCCR by \cite[Theorem 1.13]{SVdB}. 
			Hence admitting a twisted NCCR is not enough to imply that the stringy $E$-function is a polynomial. 
		\end{remark}
	
		\begin{remark}
		It is known that Gorenstein toric varieties and Gorenstein quotient singularities have polynomial stringy $E$-functions \cite[Proposition 4.4]{Batyrev1998} and \cite{BatyrevDais}, \cite[Theorem 3.6]{DL2}. 
		This is compatible with affine Gorenstein quotient singularities (of the form $\CC^n/\!\!/ G$ with $G\subset\SL_n$ finite) always admitting a NCCR given by the twisted group ring, and that conjecturally affine Gorenstein toric varieties always admit a NCCR. 
		\end{remark}
	
		\subsection{Stringy Euler characteristic}
		The stringy Euler characteristic of $X$ is defined by
		\[
			e_{st}(X):=\lim_{q\to 1} E_{st}(X; q) = \binom{n}{k}\frac{1}{n}.
		\]
		When $\gcd(k,n)=1$ this is an integer and in this subsection we show that it is equal to the rank of the Grothendieck group and to the Euler characteristic (computed via periodic cyclic homology) of the NCCR of $X$.
		
		We briefly give the explicit form of the NCCR $\Lambda$ constructed in \cite{Doyle}.
		Recall that $X=X'/\!\!/ G$ where $X':=\Hom(W,V)$ with $V$ and $W$ respectively $k$- and $n$-dimensional vector spaces and $G:=\SL(V)$.
		Writing $\CC[X']$ for the coordinate ring of $X'$, we have
		\[
			\Lambda := \End_{ \CC[X']^{G}} \left(\bigoplus_{\alpha \in \mathcal{UP}_{n,k}} (\mathbb{S}^{\alpha}V^{*}\otimes_\CC \CC[X'])^{G} \right), 
		\]
		where $\mathcal{UP}_{n,k}$ denotes the Young diagrams that fit inside a triangle of length $n-k$ and height $k$. 
		Since $G$ acts via graded automorphisms on $\CC[X']$, the invariant ring, modules of covariants and the NCCR have a natural grading\footnote{
			The grading on the invariant ring  $\CC[X']^{G}$ is, up to taking a $k$-Veronese, the same grading as the one obtained from $X$ as a cone, see e.g. \cite[Proposition 16.1.]{RSVdB}.
		}\textsuperscript{,}\footnote{\label{fn: covariants}
			Define for any finite dimensional $G$-representation $U$ the module of covariants $M(U):=(U\otimes_\CC \CC[X'])^G$. 
			Let us write $\mathcal{U}=\oplus_{\alpha \in \mathcal{UP}_{n,k}} \mathbb{S}^{\alpha}V^{*}$ for ease of notation.
			As the $G$-action is generic we have $\Lambda=\End(M(\mathcal{U}))\cong M(\End(\mathcal{U}))$, this follows from \cite[Lemma 3.3]{SVdB}, and this isomorphism is compatible with the natural gradings on either side.
			The LHS has the usual grading on morphisms of graded modules over a graded ring.
			The RHS has a grading from viewing it as a graded subring of $\End(\mathcal{U})\otimes_\CC \CC[X']$.
		}. 
		
		Let $K_0(\Lambda):=K_0 \proj(\Lambda)$ denote the Grothendieck group of finitely generated projective $\Lambda$-modules and let $K_0^{gr}(\Lambda):=K_0 \mathop\mathrm{gr\text{-}proj}(\Lambda)$ denote the graded version.
		As the category of finitely generated projective graded $\Lambda$-modules is Krull-Schmidt\footnote{
			The graded algebra $\Lambda$ is finite dimensional in every degree.
			Therefore, the graded endomorphism ring of any finitely generated graded module is finite dimensional, and hence a local ring if the module is indecomposable (idempotents lift modulo the Jacobson radical).
		}, 
		$K_0^{gr}(\Lambda)$ has a $\ZZ[x,x^{-1}]$-basis given by the non-isomorphic indecomposable graded summands of $\Lambda$.
		Therefore, by \cite[Corollary 6.4.2]{Hazrat}, $K_0(\Lambda)$ has rank equal to the cardinality of this basis, which is the number of non-isomorphic summands of the reflexive module whose endomorphism ring is $\Lambda$.
		Thus, in our case the rank of $K_0(\Lambda)$ equals $|\mathcal{UP}_{n,k}|$. This is exactly $e_{st}(X)$.
		
		Alternatively, viewing $\Lambda$ as an algebra of covariants (see footnote \ref{fn: covariants}) one sees that the grading is positive and has grade zero piece equal to $|\mathcal{UP}_{n,k}|$ copies of $\CC$. 
		But by \cite[Theorem 6]{BassHellerSwan} $K_0(\Lambda)=K_0(\Lambda_0)$, so again we see $\mathop\mathrm{rk} K_0(\Lambda)=|\mathcal{UP}_{n,k}|=e_{st}(X)$.
		
		The second way of computing $K_0$ is applicable to the periodic cyclic homology of $\Lambda$.
        Using $\mathbb A^ 1$-invariance of periodic cyclic homology \cite[(3.13)]{Kassel} and \cite[Theorem 1.1]{Hoobler} we find
		\[
		HP_i(\Lambda)=HP_i(\Lambda_0)=		\begin{cases}
												\CC^{|\mathcal{UP}_{n,k}|} &\text{for }i=0,\\
												0 &\text{for }i=1.
											\end{cases}
		\]
		Thus the Euler characteristic of $\Lambda$,	$e(\Lambda):= \dim HP_0(\Lambda)-\dim HP_1(\Lambda)= e_{st}(X)$.
		In conclusion we have
		\begin{proposition}\label{prop2}
			Let $X$ be the affine cone over the Grassmanian $\Gr(k,n)$ with $\gcd(k,n)=1$ and let $\Lambda$ denote its NCCR. Then
			\[
				e_{st}(X)=\rk K_0(\Lambda) = e(\Lambda).
			\]
		\end{proposition}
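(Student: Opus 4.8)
The plan is to show that all three quantities equal the same combinatorial number $|\mathcal{UP}_{n,k}|$ and then to identify that number with $\tfrac{1}{n}\binom{n}{k}$; in fact the substance is already laid out in the discussion preceding the statement, so I will mainly indicate how to assemble it.

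First I would record $e_{st}(X)$. By Proposition~\ref{prop: Est poly iff gcd=1} the function $E_{st}(X;q)$ is a polynomial when $\gcd(k,n)=1$, so $e_{st}(X)=\lim_{q\to 1}E_{st}(X;q)$ is obtained by plain substitution after writing $\tfrac{q-1}{q^{n}-1}=\tfrac{1}{1+q+\dots+q^{n-1}}$, which gives $e_{st}(X)=\binom{n}{k}\cdot\tfrac{1}{n}$ as already noted.

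Next I would compute $\rk K_{0}(\Lambda)$, along either of the two routes sketched before the statement. In the first, $\mathop\mathrm{gr\text{-}proj}(\Lambda)$ is Krull--Schmidt (as $\Lambda$ is finite dimensional in each degree), so $K_{0}^{gr}(\Lambda)$ is free over $\ZZ[x,x^{-1}]$ on the indecomposable graded summands of $\Lambda$, and by \cite[Corollary 6.4.2]{Hazrat} the rank of $K_{0}(\Lambda)$ is the number of these; writing $\Lambda=\End(M(\mathcal U))$ with $M(\mathcal U)=\bigoplus_{\alpha\in\mathcal{UP}_{n,k}}M(\mathbb{S}^{\alpha}V^{*})$ a direct sum of pairwise non-isomorphic indecomposable reflexive modules, that number is $|\mathcal{UP}_{n,k}|$. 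In the second, viewing $\Lambda$ as an algebra of covariants it is non-negatively graded with $\Lambda_{0}=(\End\mathcal U)^{G}=\bigoplus_{\alpha,\beta\in\mathcal{UP}_{n,k}}\Hom_{G}(\mathbb{S}^{\alpha}V^{*},\mathbb{S}^{\beta}V^{*})\cong\CC^{|\mathcal{UP}_{n,k}|}$, whence $K_{0}(\Lambda)\cong K_{0}(\Lambda_{0})$ by \cite[Theorem 6]{BassHellerSwan} has rank $|\mathcal{UP}_{n,k}|$. For $e(\Lambda)$ the positive grading is used once more: $\mathbb A^{1}$-invariance of periodic cyclic homology \cite[(3.13)]{Kassel} together with \cite[Theorem 1.1]{Hoobler} gives $HP_{*}(\Lambda)=HP_{*}(\Lambda_{0})$, and $HP_{0}(\CC^{m})=\CC^{m}$, $HP_{1}(\CC^{m})=0$, so $e(\Lambda)=\dim HP_{0}(\Lambda)-\dim HP_{1}(\Lambda)=|\mathcal{UP}_{n,k}|$. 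The auxiliary fact used twice here, that the $\mathbb{S}^{\alpha}V^{*}$ with $\alpha\in\mathcal{UP}_{n,k}$ are pairwise non-isomorphic as $\SL(V)$-representations, is immediate: such a Young diagram has fewer than $k$ rows, and distinct partitions with fewer than $k$ rows yield distinct $\SL_{k}$-representations.

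It remains to prove the purely combinatorial identity $|\mathcal{UP}_{n,k}|=\tfrac{1}{n}\binom{n}{k}$, which I expect to be the only genuinely non-formal ingredient. An element of $\mathcal{UP}_{n,k}$ is a Young diagram fitting inside the staircase region below the diagonal of a $k\times(n-k)$ box, so $|\mathcal{UP}_{n,k}|$ counts the monotone lattice paths in that box staying weakly below the diagonal; since $\gcd(k,n-k)=\gcd(k,n)=1$, the cycle lemma of Dvoretzky--Motzkin identifies this count with the rational Catalan number $\tfrac{1}{k+(n-k)}\binom{k+(n-k)}{k}=\tfrac{1}{n}\binom{n}{k}$, which is in particular an integer, as asserted in the text. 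Combining the four computations gives $e_{st}(X)=\rk K_{0}(\Lambda)=e(\Lambda)$.
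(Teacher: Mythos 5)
Your proposal is correct and follows essentially the same route as the paper, whose proof is precisely the discussion preceding the statement: evaluate $E_{st}(X;q)$ at $q=1$, get $\rk K_0(\Lambda)=|\mathcal{UP}_{n,k}|$ either via the graded Krull--Schmidt property and \cite[Corollary 6.4.2]{Hazrat} or via positivity of the grading and \cite[Theorem 6]{BassHellerSwan}, and get $e(\Lambda)=|\mathcal{UP}_{n,k}|$ from $\mathbb{A}^1$-invariance of $HP$ together with \cite[Theorem 1.1]{Hoobler}. The one genuine addition is your cycle-lemma (rational Catalan) proof of the counting identity $|\mathcal{UP}_{n,k}|=\tfrac{1}{n}\binom{n}{k}$ when $\gcd(k,n)=1$, which the paper asserts without argument; that identification, and your Schur-lemma justification that the $\mathbb{S}^{\alpha}V^{*}$ are pairwise non-isomorphic, are correct and welcome supplements rather than a different approach.
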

		\begin{remark}
			We did not need to use that $\Lambda_0$ is equal to $|\mathcal{UP}_{n,k}|$ copies of $\CC$ to conclude  $e(\Lambda)=e_{st}(X)$. 
			The first way of showing $\mathop\mathrm{rk} K_0(\Lambda) =	e_{st}(X)$ did not use this, it was the fact that $\Lambda$ is an endomorphism ring which was relevant.
			Then merely by using that $\Lambda_0$ is finite dimensional we find 
			\[
			\rk K_0(\Lambda) = \rk K_0(\Lambda_0) = \rk K_0(\Lambda_0/\rad\Lambda_0) = e(\Lambda_0/\rad\Lambda_0)  = e(\Lambda_0) = e(\Lambda),
			\]
			where in the fourth equation we used \cite[Theorem II.5.1]{Goodwillie}. (Of course $\rad\Lambda_0=0$, but we did not need to know this.)
		\end{remark}
	
		This leads to the following question.
		\begin{question}\label{ques: 2}
			Does the stringy Euler characteristic of a variety equal the Euler characteristic (computed via periodic cyclic homology) of a NCCR?
		\end{question}

		\begin{remark}
			It is shown in \cite[Proposition 4.10]{Batyrev1998} that the stringy Euler characteristic of a normal $\QQ$-Gorenstein toric variety defined by a fan $\Sigma$ is equal to the volume of the shed associated to $\Sigma$.
			Now, for example, for the NCCRs of toric affine varieties constructed in \cite{SVdB4, SVdB5} the Euler characteristic also equals this volume by \cite[Theorem A.1]{SVdB4}, the derived invariance of $K_0$ and a similar reasoning as above equating the rank of $K_0$ and the Euler characteristic.
			In addition, for affine Gorenstein quotient singularities (of the form $\CC^n/\!\!/ G$ with $G\subset\SL_n$ finite) it follows from (the proof of) \cite[Theorem 8.4]{Batyrev1999} that the stringy Euler characteristic equals the number of conjugacy classes in $G$, and hence equals the number of irreducible representation of $G$.
			This in turn equals the number of non-isomorphic indecomposable summands of the reflexive module defining a NCCR of the quotient singularity, see e.g.\ \cite[\S 1.2]{SVdB}.
			As above this number equals the Euler characteristic of the NCCR.
		\end{remark}
	
		\begin{remark}
			In \cite{BorisovWang} Borisov and Wang define `Clifford-stringy Euler characteristics' as a stringy invariant of a Clifford noncommutative variety $(\mathbb{P}W,\mathcal{B}_0)$ (notation of loc.cit.).
			It is worthwhile to note that this invariant is exactly the Euler characteristic of $\mathcal{B}_0$ calculated via periodic cyclic homology.
			This follows from their main theorem \cite[Theorem 7.2]{BorisovWang}, the equivalence $D^b(Y_W ) \cong D^b(\mathbb{P}W,\mathcal{B}_0)$ (notation of loc.cit.) and the derived invariance of periodic cyclic homology.
		\end{remark}
	
		\begin{remark}
			The idea of using noncommutative algebras to resolve singular varieties and compute relevant data appeared early on in physics, see e.g.\ \cite{BerLeigh}. 
			From this point of view, it is natural to expect a relation between suitable `string theoretic invariants' and noncommutative geometry.
		\end{remark}
	
	\appendix
	\section{Noncommutative crepant resolutions vs.\ strongly crepant categorical resolutions}\label{appendix}
		In this appendix we briefly indicate the relationship between NCCRs and strongly crepant categorical resolutions (in the affine setting).
		This is probably well-known to experts, but as far as the author is aware this is not written down anywhere explicitly, however see \cite[Lemma 1.2]{SVdB} and \cite[Lemma 2.2.1.]{VdBICM}.
	
		For the remainder of this appendix let $R$ denote a finitely generated (as $\CC$-algebra) Gorenstein normal ring. 
		By a \emph{noncommutative resolution} of $R$ we mean an $R$-algebra which can be written as the endomorphism ring of a finite reflexive $R$-module that is of finite global dimension. 
		A \emph{noncommutative crepant resolution} is a noncommutative resolution that is moreover maximal Cohen-Macaulay as an $R$-module. 
		For the definition of a \emph{(strongly crepant) categorical resolution} we refer to \cite[Definitions 3.2.~ and 3.5.]{Kuz}.
		(But for  us a smooth triangulated category is an algebraic triangulated category that is smooth in the dg sense.)
		We consider all modules as left modules.
	
		\begin{proposition}\label{prop: app}
			Let $\Lambda=\End_R(M)$ be a noncommutative resolution of $R$.
			\begin{enumerate}[label = {\textnormal{(}$*$\textnormal{)}}]
				\centering
				\item\label{*}
				 Assume that $M$ has no self-extensions as $\Lambda$-module.
			\end{enumerate}
			Then $\pi^*:\Perf(R)\to \Perf(\Lambda) : N \mapsto M\otimes_R^L N$ gives a categorical resolution of singularities of $\Spec R$.
			Furthermore, this categorical resolution is strongly crepant if and only if $\Lambda$ is a noncommutative crepant resolution.
		\end{proposition}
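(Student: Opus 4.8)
The plan is to realise the resolving category as $\mathcal{T}:=D^{b}(\mod\Lambda)$. Since $\Lambda$ is Noetherian of finite global dimension, $D^{b}(\mod\Lambda)=\Perf(\Lambda)$, an algebraic triangulated category, and it is smooth in the dg sense: $\Lambda^{e}:=\Lambda\otimes_{\CC}\Lambda^{\mathrm{op}}$ is module-finite over the finitely generated (hence Noetherian) $\CC$-algebra $R\otimes_{\CC}R$, so $\Lambda^{e}$ is itself Noetherian and of finite global dimension (as $\CC$ is a field and the left and right global dimensions of the Noetherian ring $\Lambda$ coincide), whence $\Lambda\in\Perf(\Lambda^{e})$. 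As $\pi^{*}$ one of course takes $M\otimes_{R}^{L}-$; it lands in $\Perf(\Lambda)$ because $\pi^{*}R=M$ is a finitely generated $\Lambda$-module and $\Perf(R)=\langle R\rangle$. Its right adjoint is $\pi_{*}:=\RHom_{\Lambda}(M,-)$, which sends $D^{b}(\mod\Lambda)$ into $D^{b}(\mod R)$ because $\Lambda$ is module-finite over $R$ and of finite global dimension. To see that $(\mathcal{T},\pi_{*},\pi^{*})$ is a categorical resolution of $\Spec R$ (in the sense of \cite[Definition~3.2]{Kuz}), the only nonformal point is that the unit $\mathrm{id}\to\pi_{*}\pi^{*}$ is an isomorphism on $\Perf(R)$; since both functors are exact and $\Perf(R)=\langle R\rangle$, this reduces to the claim that $R\to\RHom_{\Lambda}(M,M)$ is an isomorphism, i.e.\ to $\End_{\Lambda}(M)=R$ together with $\Ext^{>0}_{\Lambda}(M,M)=0$. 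The latter is precisely hypothesis \ref{*}, and the former is the standard ``reflexive equivalence'' fact: for $M$ reflexive over the normal ring $R$ the natural map $R\to\End_{\End_{R}(M)}(M)$ is an isomorphism, both sides being reflexive $R$-modules that agree in codimension one.

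For the crepancy assertion I would first make the right adjoint $\pi^{!}$ of $\pi_{*}$ explicit. As $\Lambda$ has finite global dimension, $M$ is perfect over $\Lambda$, so $\pi_{*}\cong M^{\vee}\otimes_{\Lambda}^{L}-$ with $M^{\vee}:=\RHom_{\Lambda}(M,\Lambda)$ the $\Lambda$-dual, regarded as an $(R,\Lambda)$-bimodule, and therefore $\pi^{!}:=\RHom_{R}(M^{\vee},-)$. For $N\in\Perf(R)$ one has $\pi^{!}N\cong\pi^{!}(R)\otimes_{R}^{L}N$ and $\pi^{*}N\cong M\otimes_{R}^{L}N$, so by the definition of strong crepancy \cite[Definition~3.5]{Kuz} the resolution is strongly crepant if and only if the canonical morphism $\pi^{*}(R)=M\to\pi^{!}(R)=\RHom_{R}(M^{\vee},R)$ is an isomorphism (this morphism is the one adjoint to the inverse of the unit $R\xrightarrow{\sim}\RHom_{\Lambda}(M,M)$ from the previous paragraph). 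Since $R$ is Gorenstein, $\RHom_{R}(-,R)$ is an involutive anti-equivalence of $D^{b}(\mod R)$, so after dualising this condition becomes an isomorphism of $(R,\Lambda)$-bimodules
\[
\RHom_{\Lambda}(M,\Lambda)\ \cong\ \RHom_{R}(M,R).
\]

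It remains to identify the displayed condition with ``$\Lambda$ is an NCCR''. The key input is that, $M$ being reflexive over the normal ring $R$, on the open $U\subseteq\Spec R$ over which $M$ is locally free (the complement having codimension at least $2$) the trace form gives an isomorphism of $\Lambda|_{U}$-bimodules $\RHom_{R}(\Lambda,R)|_{U}\cong\Lambda|_{U}$; as $\Lambda$ and $\Hom_{R}(\Lambda,R)$ are reflexive over $R$, pushing forward along $U\hookrightarrow\Spec R$ yields an isomorphism of $\Lambda$-bimodules $\Hom_{R}(\Lambda,R)\cong\Lambda$ with no further hypothesis. If $\Lambda$ is an NCCR, i.e.\ additionally maximal Cohen--Macaulay as an $R$-module, then $\RHom_{R}(\Lambda,R)$ is concentrated in degree $0$ (Cohen--Macaulayness together with $\omega_{R}\cong R$), hence $\RHom_{R}(\Lambda,R)\cong\Lambda$ in $D(\Lambda^{e})$, and a tensor--Hom adjunction gives
\[
\RHom_{\Lambda}(M,\Lambda)\cong\RHom_{\Lambda}\bigl(M,\RHom_{R}(\Lambda,R)\bigr)\cong\RHom_{R}(M\otimes_{\Lambda}^{L}\Lambda,R)=\RHom_{R}(M,R),
\]
so $\pi^{*}$ is strongly crepant by the previous paragraph. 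Conversely, assuming strong crepancy, one applies $-\otimes_{\Lambda}^{L}M$ to $\RHom_{\Lambda}(M,\Lambda)\cong\RHom_{R}(M,R)$ and uses \ref{*} to identify the left-hand side with $\RHom_{\Lambda}(M,M)=R$; this shows the trace ideal of $M$ equals $R$, so $M$ is a generator of $\mod R$, and feeding that back through the duality forces $\RHom_{R}(\Lambda,R)$ to be concentrated in degree $0$ --- equivalently, $\Lambda$ is maximal Cohen--Macaulay over $R$, hence an NCCR.

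The genuinely delicate step is this converse: promoting the derived isomorphism $\RHom_{\Lambda}(M,\Lambda)\cong\RHom_{R}(M,R)$ to the Cohen--Macaulay property of $\Lambda$, where one really uses both that $R$ is Gorenstein (so $\RHom_{R}(-,R)$ is a duality) and that $R$ is normal (so reflexive modules are recovered from their restriction to codimension one). The rest is the routine, but easy-to-mishandle, bookkeeping of the left and right $\Lambda$- and $R$-module structures through the various tensor--Hom adjunctions, and the identification of the abstract isomorphisms above with the canonical comparison morphisms built into Kuznetsov's formalism; granting that, everything reduces --- via $\Perf(R)=\langle R\rangle$ and Grothendieck duality over the Gorenstein ring $R$ --- to checking a single isomorphism at $R$.
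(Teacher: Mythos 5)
The first half of your argument (the categorical resolution part) follows the paper's route: reduce the unit isomorphism to the generator $R$, use \ref{*} for the vanishing of higher self-extensions and the reflexive/codimension-one argument over the normal ring $R$ for $\End_\Lambda(M)=R$. One caveat there: your justification of smoothness is a non sequitur. Finiteness of the global dimension of $\Lambda$ does not formally give finiteness of the global dimension of $\Lambda\otimes_\CC\Lambda^{\mathrm{op}}$ (``$\CC$ is a field and left and right global dimension of $\Lambda$ coincide'' proves nothing about the enveloping algebra); the correct statement, for Noetherian algebras finite over an affine centre over a perfect field, is exactly the content of the results the paper cites (\cite[Theorem 5.1.]{ElaginLuntsSchnurer}, or \cite[Lemma 4.2.]{stafford2008}), and it needs those inputs rather than a one-line tensor-product bound.

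The serious gap is in the crepancy statement. You assert that, by \cite[Definition 3.5.]{Kuz}, strong crepancy is equivalent to the canonical map $\pi^*(R)\to\pi^!(R)$ being an isomorphism. That is not the definition of strong crepancy: it is (the generator-level form of) \emph{weak} crepancy. Strong crepancy requires the \emph{relative Serre functor} of $\Perf(\Lambda)$ over $\Perf(R)$ to be the identity, i.e.\ a bimodule-level condition, essentially $\RHom_R(\Lambda,R)\cong\Lambda$ in the derived category of $\Lambda$-bimodules, not merely an isomorphism of the two adjoints on $\Perf(R)$. The distinction is not pedantic in this paper: the cone over $\Gr(k,n)$ admits a weakly crepant categorical resolution for all $k,n$ but a strongly crepant one only when $\gcd(k,n)=1$, and a remark explicitly notes that weak crepancy does not force a polynomial stringy $E$-function; so an argument that only pins down weak crepancy cannot prove the stated equivalence with NCCRs. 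The paper instead observes that strong crepancy means triviality of the relative Serre functor by definition, and that for $\Lambda=\End_R(M)$ over Gorenstein normal $R$ this triviality is equivalent to $\Lambda$ being maximal Cohen--Macaulay by (the proof of) \cite[Theorem 4.14.]{IW1}. Moreover, even within your own reformulation, the converse direction is left unproven: the step ``feeding that back through the duality forces $\RHom_R(\Lambda,R)$ to be concentrated in degree $0$'' is precisely the point that needs an argument (this is where the Iyama--Wemyss input, or an equivalent local-duality/depth argument, enters), and you flag it as delicate without supplying it. As it stands, the second half of the proposition is not established.
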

		\begin{proof}
			The functor $\pi^*$ has as right adjoint $\pi_*:=\RHom_\Lambda(M,-)$. 
			In order to show that $(\Perf(\Lambda),\allowbreak\pi_*,\pi^*)$ is a categorical resolution of singularities we have to show that $\Perf(\Lambda)$ is smooth and that the natural transformation 
			\begin{equation}\label{eq: unit}
				1_{\Perf(R)}\to\pi_*\pi^*
			\end{equation}
			is an isomorphism.
			
			The smoothness of $\Perf(\Lambda)$ follows as $\Perf(\Lambda)=D^b(\mod\Lambda)$, since $\Lambda$ has finite global dimension, and the latter is smooth by \cite[Theorem 5.1.]{ElaginLuntsSchnurer}.
			(Alternatively one can show the smoothness of $\Lambda$ directly using \cite[Lemma 4.2.]{stafford2008}.)
			
			To see that \eqref{eq: unit} is an isomorphism it suffices to note, since $R$ generates $\Perf(R)$, that $R\to\pi_*\pi^*R$ is an isomorphism  by \ref{*} and as $\End_\Lambda(M)=\Z(\Lambda)=R$ (the last equality follows from \cite[\href{https://stacks.math.columbia.edu/tag/0AV9}{Lemma 0AV9}]{stacks-project} as the inclusion $R\hookrightarrow\Z(\Lambda)$ is an isomorphism in codimension one by the normality of $R$).
			
			Finally, to show that the two notions of crepancy correspond to each other it suffices to observe that both are equivalent to the relative Serre functor being the identity.
			For strongly crepant categorical resolutions this is by definition, whilst for NCCRs this follows from (the proof of) \cite[Theorem 4.14.]{IW1}.
		\end{proof}
		
		\begin{remark}
			It is often the case that the reflexive module defining the noncommutative (crepant) resolution contains $R$ as a direct summand or is maximal Cohen-Macaulay.
			In both cases $M$ is a projective $\Lambda$-module, and hence satisfies \ref{*}.
			Moreover, \ref{*} is automatically satisfied when $\dim R\leq 3$ \cite[Proposition 2.15.]{VdBICM}.
		\end{remark}

		In general there is no reason to expect every categorical resolution $\mathcal{D}$ of $\Spec R$ to come from a noncommutative resolution.
		One compelling reason for this is that there is no `birationality' requirement in the definition of a categorical resolution of singularities (e.g.\ $D^b(\Coh(\mathbb{P}^n))$ is a categorical resolution of $D^b(\Coh(\Spec k))$).
		
		It is not really clear what the right categorical notion of `birationality' should be. 
		A sufficient condition that would enforce $\mathcal{D}$ to be equivalent to the derived category of a noncommutative resolution of $R$ would be the following.
		Suppose $\mathcal{D}$ has a tilting object with endomorphism ring $\Lambda$ being a finite reflexive $R$-module which is Morita equivalent to $R$ in codimension one. (This should be viewed as being analogous to `isomorphic in codimension one' for varieties.) 
		Then $\Lambda$ can be written as the endomorphism ring of a reflexive $R$-module \cite[Proposition 2.11]{IW}.
		Of course, as $\mathcal{D}$ is smooth $\Lambda$ has finite global dimension and hence is a noncommutative resolution of $R$.
		
		If the equivalence between $\mathcal{D}$ and $\Perf(\Lambda)$ is compatible with the $\Perf(R)$-module structures\footnote{In this context it seems natural to require $\mathcal{D}$ having an $R$-linear dg enhancement. Then, as the equivalence is obtained from an $R$-linear dg functor, the compatibility is automatic.}, then, as in the proof of Proposition \ref{prop: app}, $\mathcal{D}$ is strongly crepant if and only if $\Lambda$ is an NCCR.
		

	\bibliographystyle{amsalpha}
	\bibliography{nccr}
\end{document}